\newtheorem{theorem}{Theorem}[section]
\newtheorem{lemma}[theorem]{Lemma}
\newtheorem{corollary}[theorem]{Corollary}
\theoremstyle{definition}
\newtheorem{question}[theorem]{Question}
\theoremstyle{remark}
\newtheorem{remark}[theorem]{Remark}
\begin{document}

\title[Splitting theorem]
{Two generalizations of Cheeger-Gromoll splitting theorem \\ via
Bakry-Emery Ricci curvature}
\author{Fuquan Fang}
\thanks{The research of the first author was supported by
NSF Grant 19925104 of China, 973 project of Foundation Science of
China, and the Capital Normal University.}
\address{Nankai Institute of Mathematics,
Weijin Road 94, Tianjin 300071, P.R.China}
\address{Department of Mathematics, Capital Normal University,
Beijing, P.R.China}
  \email{ffang@nankai.edu.cn}
\author{Xiang-dong Li}
\thanks{The research of the second author was partially supported by a Delegation in CNRS (2005-2006) at
the Universit\'e Paris-Sud.}
\address{Institut de Math\'ematiques,
Universit\'e Paul Sabatier, 118, route de Narbonne, 31062,
Toulouse Cedex 9, France}
 \email{xiang@math.ups-tlse.fr}
\author{Zhenlei Zhang}
\address{Nankai Institute of Mathematics,
Weijin Road 94, Tianjin 300071, P.R.China}

\date{}

\maketitle

\begin{abstract}
In this paper, we prove two generalized versions of the
Cheeger-Gromoll splitting theorem via the non-negativity of the
Bakry-\'Emery Ricci curavture on complete Riemannian manifolds.
\end{abstract}

\vskip1cm

\section{Introduction}

Cheeger and Gromoll's splitting theorem \cite{CG} played an
important role in the study of manifolds with nonnegative or
almost nonnegative Ricci curvature. In this paper, we consider the
manifolds with nonnegative Bakry-\'{E}mery Ricci curvature and
prove two generalized versions of the splitting theorem on such
manifolds.

Following Bakry-\'Emery \cite{BE}, see also \cite{Q, BQ3, Lo1,
LD}, given a Riemannian manifold $(M,g)$ and a $C^2$-smooth
function $\phi$, $M$ is said to have nonnegative
$\infty$-dimensional Bakry-\'{E}mery Ricci curvature associated to
$\phi$ if $Ric+Hess(\phi)\geq0$, where $Ric$ denotes the Ricci
curvature of $g$ and $Hess$ denotes the Hessian with respect to
$g$. As pointed out in Lott \cite{Lo1}, in general, the splitting
theorem does not hold for manifolds with nonnegative
$\infty$-dimensional Bakry-\'{E}mery Ricci curvature. A trivial
counterexample is given by the hyperbolic $n$-space form
$\mathbb{H}^{n}$, where $Ric+\frac{1}{\delta}Hess(\rho^{2})\geq0$
for some small constant $\delta>0$ and distance function $\rho$.
Obviously there are many lines in this space but it doesn't split
off a line. See \cite{WW}.

If the manifold $M$ is compact and its $\infty$-dimensional
Bakry-\'{E}mery Ricci curvature is positive, then $\pi_{1}(M)$ is
finite. This was first proved by X.-M. Li \cite{LM}, see also
\cite{FG, WW, W, Z}. Also, from Lott's work \cite[Theorem 1]{Lo1},
a compact manifold with nonnegative $\infty$-dimensional
Bakry-\'{E}mery Ricci curvature has $b_{1}$ parallel vector fields
where $b_1$ is the first Betti number of $M$, which are orthogonal
to the gradient field of $\phi$. This indicates that the universal
Riemannian covering space of $(M,g)$ should split off $b_{1}$
lines. We confirm this in this paper, as a corollary of the
following theorem.

\begin{theorem}\label{t01}
Let $(M,g)$ be a complete connected Riemannian manifold with
$Ric+Hess(\phi)\geq0$ for some $\phi\in C^2(M)$ which is bounded
above uniformly on $M$. Then it splits isometrically as
$N\times\mathbb{R}^{l}$, where $N$ is some complete Riemannian
manifold without lines and $\mathbb{R}^{l}$ is the $l$-Euclidean
space. Furthermore, the function $\phi$ is constant on each
$\mathbb{R}^{l}$ in this splitting.
\end{theorem}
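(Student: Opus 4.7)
My plan is to imitate the classical Cheeger--Gromoll splitting argument with the Laplacian $\Delta$ replaced throughout by the weighted Laplacian $\Delta_\phi u := \Delta u - \langle\nabla\phi,\nabla u\rangle$, using the upper bound $\phi\le k$ to salvage an effective Laplacian comparison for the distance function. Concretely, under $\mathrm{Ric}+\mathrm{Hess}\,\phi\ge 0$ and $\phi\le k$, a Wei--Wylie type weighted Laplacian comparison gives, in the barrier sense, $\Delta_\phi r(x)\le C_{n,k}/r(x)$ for $r(x)=d(x,p)$, decaying to $0$ as $r\to\infty$. This is what will make the Busemann function machinery available in the weighted setting.

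Suppose $M$ contains a line $\gamma\colon\mathbb{R}\to M$ (otherwise $l=0$). I would form the two Busemann functions $b_\pm$; from the decay above they are $\phi$-subharmonic in the barrier (equivalently viscosity) sense, $\Delta_\phi b_\pm\ge 0$. The triangle inequality gives $b_++b_-\ge 0$ with equality on $\gamma$, so the strong maximum principle for the uniformly elliptic operator $\Delta_\phi$ forces $b_++b_-\equiv 0$. In particular $b:=b_+$ is $\phi$-harmonic, and hence smooth by elliptic regularity.

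Next I would invoke the weighted Bochner identity
\begin{equation*}
\tfrac12\Delta_\phi|\nabla b|^2 = |\mathrm{Hess}\,b|^2 + (\mathrm{Ric}+\mathrm{Hess}\,\phi)(\nabla b,\nabla b) + \langle\nabla b,\nabla\Delta_\phi b\rangle \ge 0.
\end{equation*}
Combined with $|\nabla b|\le 1$ and equality along $\gamma$, a second application of the strong maximum principle gives $|\nabla b|\equiv 1$, whence $\mathrm{Hess}\,b\equiv 0$ and $(\mathrm{Ric}+\mathrm{Hess}\,\phi)(\nabla b,\nabla b)\equiv 0$. So $\nabla b$ is a parallel unit vector field, and de Rham's theorem splits $M$ isometrically as $N\times\mathbb{R}$ with $b$ the Euclidean coordinate $t$. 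On this $\mathbb{R}$-factor $\mathrm{Ric}(\partial_t,\partial_t)\equiv 0$, so the vanishing of $(\mathrm{Ric}+\mathrm{Hess}\,\phi)(\partial_t,\partial_t)$ forces $\partial_t^2\phi\equiv 0$; a linear function of $t$ bounded above on $\mathbb{R}$ must be constant, so $\phi$ is independent of $t$. Hence $\phi$ descends to $N$, which again satisfies the hypotheses, and iterating (the dimension drops at each step) yields the splitting $M=N'\times\mathbb{R}^l$ with $N'$ line-free and $\phi$ constant along every Euclidean direction.

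The main technical obstacle is the very first step: without an upper bound on $\phi$ the standard Laplacian comparison breaks down, as witnessed by the hyperbolic example in the introduction, so the whole proof hinges on a Wei--Wylie style one-sided $\phi$-Laplacian comparison valid under $\phi\le k$. A secondary care is that $b_\pm$ are only Lipschitz a priori, so all differential inequalities and maximum principles must be interpreted in the barrier/viscosity sense until smoothness of $b$ is deduced from its $\phi$-harmonicity and standard elliptic regularity theory.
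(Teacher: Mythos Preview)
Your outline is essentially the paper's own argument: weighted Laplacian comparison $\Rightarrow$ $L$-subharmonicity of the Busemann functions in the barrier sense $\Rightarrow$ Calabi--Hopf maximum principle forces $b^++b^-\equiv 0$ $\Rightarrow$ $Lb^\pm=0$ and elliptic regularity $\Rightarrow$ weighted Bochner gives $\nabla b$ parallel $\Rightarrow$ splitting, then $\partial_t^2\phi=0$ together with the upper bound forces $\phi$ constant on the $\mathbb{R}$-factor, and one iterates. Two remarks.

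First, there is a sign slip that, as written, breaks the maximum-principle step. With the convention $b_\pm(x)=\lim_{t\to\infty}\bigl(t-d(x,\gamma(\pm t))\bigr)$ under which $\Delta_\phi b_\pm\ge 0$, the triangle inequality gives $b_++b_-\le 0$ (not $\ge 0$), with equality on $\gamma$; the subharmonic function $b_++b_-$ then attains its \emph{maximum} on $\gamma$, and the strong maximum principle applies. Under your stated inequalities (subharmonic and $\ge 0$ with interior minimum $0$) no conclusion follows.

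Second, minor differences from the paper: rather than citing a Wei--Wylie comparison, the paper derives the needed estimate directly from the index form via two integrations by parts, obtaining
\[
L\rho \;\le\; \frac{n-1}{\rho}-\frac{2\phi(q)}{\rho}+\frac{2}{\rho^{2}}\int_{0}^{\rho}\phi\circ\gamma\,dt,
\]
and the upper bound on $\phi$ makes the right-hand side tend to $0$ locally (note the bound is not of the uniform form $C_{n,k}/\rho$ you state, since $\phi$ need not be bounded below; only a local estimate near each fixed $q$ is used). The paper also reads off $|\nabla b|\equiv 1$ directly from the asymptotic-ray description of $\nabla b$ rather than via a second maximum principle, and builds the isometry $N\times\mathbb{R}\to M$ explicitly from the flow of $\nabla b$ rather than invoking de Rham. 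These are cosmetic; modulo the sign fix your proof is correct and matches the paper's.
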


Then the corollary reads as
\begin{corollary}\label{c00}
Let $(M,g)$ be a closed connected Riemannian manifold with
$Ric+Hess(\phi)\geq0$ for some smooth function $\phi$ on $M$. Then
we have an isometric decomposition for its universal Riemannian
covering space: $\widetilde{M}\cong N\times\mathbb{R}^{l}$, where
$N$ is a closed manifold, $\mathbb{R}^{l}$ is the $l$-Euclidean
space and $l\geq b_{1}$, the first Betti number of $M$.
Furthermore, the lifting function of $\phi$, say $\tilde{\phi}$,
is constant on each $\mathbb{R}^{l}$-factor.

If $b_{1}$ equals the dimension of $M$, then $(M,g)$ is the flat
torus.
\end{corollary}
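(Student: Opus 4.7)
The plan is to reduce Corollary~\ref{c00} to Theorem~\ref{t01} applied on the universal Riemannian cover. Let $\pi:\widetilde{M}\to M$ denote the covering projection and set $\tilde\phi=\phi\circ\pi$. Since $M$ is closed, $\phi$ is bounded on $M$, so $\tilde\phi$ is uniformly bounded on $\widetilde M$. As $\pi$ is a local isometry, the curvature hypothesis lifts to $Ric_{\tilde g}+Hess(\tilde\phi)\geq 0$, and Theorem~\ref{t01} produces an isometric splitting $\widetilde{M}=N\times\mathbb{R}^l$ with $N$ containing no lines and $\tilde\phi$ constant along every Euclidean fiber.

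To obtain $l\geq b_1$ I would invoke Lott's theorem~\cite[Theorem~1]{Lo1}, which furnishes $b_1$ pointwise linearly independent parallel vector fields $X_1,\dots,X_{b_1}$ on $(M,g)$ orthogonal to $\nabla\phi$. Their lifts $\widetilde X_1,\dots,\widetilde X_{b_1}$ are parallel on $\widetilde M$; since parallel vector fields on a complete simply connected manifold have integral curves that are length-minimizing geodesics, these fields generate a Euclidean factor of dimension at least $b_1$ in the de Rham decomposition of $\widetilde M$. Because $N$ carries no line and hence no Euclidean de Rham factor, this contribution must sit inside $\mathbb{R}^l$, forcing $l\geq b_1$.

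For compactness of $N$ I would argue as follows. By the uniqueness part of the de Rham decomposition theorem and the absence of a Euclidean factor in $N$, every isometry of $\widetilde M=N\times\mathbb{R}^l$ preserves the product structure, so the deck group $\Gamma=\pi_1(M)$ admits projections $\Gamma\to\mathrm{Iso}(N)$ and $\Gamma\to\mathrm{Iso}(\mathbb{R}^l)$. The image $\Gamma_E$ of the latter projection acts cocompactly on $\mathbb{R}^l$ (since $\widetilde M\to\mathbb{R}^l$ is $\Gamma$-equivariant and induces a surjection from the compact quotient $M$ onto $\mathbb{R}^l/\Gamma_E$), hence by Bieberbach's theorem contains a translation lattice of finite index. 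A standard fundamental-domain argument then shows that $\Gamma_0=\ker(\Gamma\to\mathrm{Iso}(\mathbb{R}^l))$ acts properly discontinuously and cocompactly on $N$, whence $N$ is compact.

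Finally, if $b_1=\dim M=n$ then $l=n$ and $N$ is a point, so $\widetilde M=\mathbb{R}^n$ and $M$ is a compact flat manifold $\mathbb{R}^n/\Gamma$. For such a manifold the first Betti number equals the dimension of the subspace of $\mathbb{R}^n$ fixed by the point (holonomy) group of $\Gamma$, which acts faithfully on $\mathbb{R}^n$; the equality $b_1=n$ thus forces this point group to be trivial, so $\Gamma$ consists of pure translations and $M$ is a flat torus. I expect the main obstacle in making this outline rigorous to be the third step --- verifying that the $\Gamma$-action splits along the de Rham factors in a manner that permits a Bieberbach-type reduction to a cocompact action on $N$ --- although this essentially follows the template used in the classical compact Cheeger--Gromoll setting.
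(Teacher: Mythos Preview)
Your outline is essentially the paper's own proof, which in its entirety reads: ``By Theorem~1.1 and using the same argument as did by Cheeger and Gromoll in \cite{CG}, we conclude the result.'' You have simply unpacked what that argument is---lift to $\widetilde M$, apply Theorem~\ref{t01}, then run the Cheeger--Gromoll structure theorem---so the approaches coincide.

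One remark on your third step, since you flag it yourself. The classical Cheeger--Gromoll route to the compactness of $N$ does \emph{not} proceed via Bieberbach first. Rather, one argues directly: if $N$ were noncompact it would contain a ray $\sigma$; since $M$ is compact there is a compact fundamental domain $D\subset\widetilde M$, and translating the points $(\sigma(t_i),0)$ back into $D$ by deck transformations (which preserve the product splitting by de~Rham uniqueness, exactly as you say) yields, after passing to a limit, a line in $N$---contradicting Theorem~\ref{t01}. This avoids the discreteness-of-$\Gamma_E$ issue entirely. Once $N$ is known to be compact, $\mathrm{Iso}(N)$ is a compact Lie group, so $\Gamma$ has a finite-index subgroup mapping injectively and discretely into $\mathrm{Iso}(\mathbb{R}^l)$; Bieberbach then applies cleanly and yields a finite-index $\mathbb{Z}^l\subset\Gamma$, from which $b_1\leq l$ follows without appeal to Lott's parallel vector fields (though your route through \cite{Lo1} is also valid and is indeed the motivation the paper cites in the introduction).
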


Another generalized version of splitting theorem can be described
as follows. According to \cite{Ba94, BQ3}, we say that the
symmetric diffusion operator $L=\Delta-\nabla\phi\cdot\nabla$
satisfies the curvature-dimension condition $CD(0, m)$ if
$$
L|\nabla u|^2\geq {2|L u|^2\over m}+2<\nabla u, \nabla Lu>, \ \
\forall \ u\in C_0^\infty(M).$$ Following the notation used in
\cite{LD}, which is slightly different from \cite{Ba94, BQ3, Lo1},
we define the $m$-dimensional Bakry-\'Emery Ricci curvature of
$L=\Delta-\nabla\phi\cdot\nabla$ on an $n$-dimensional Riemaniann
manifold as follows
$$Ric_{m, n}(L):=Ric+Hess(\phi)-{\nabla\phi\otimes\nabla \phi\over
m-n},$$ where $m={\rm dim}_{BE}(L)>n$ is called the Bakry-\'Emery
dimension of $L$, which is a constant and is not necessarily to be
an integer. By \cite{Ba94, BQ3, LD}, we know that $CD(0, m)$ holds
if and only if $Ric_{m, n}(L)\geq 0$. We now state the following

\begin{theorem}\label{t02}
Let $(M,g)$ be a complete connected Riemannian $n$-manifold and
$\phi\in C^2(M)$ be a function satisfying that $Ric_{m, n}(L)\geq
0$ for some constant $m={\rm dim}_{BE}(L)>n$ which is not
necessarily to be an integer. Then $M$ splits isometrically as
$N\times\mathbb{R}^{l}$ for some complete Riemannian manifold $N$
without line
 and the $l$-Euclidean space $\mathbb{R}^{l}$.
Furthermore, the function $\phi$ is constant on each
$\mathbb{R}^{l}$-factor, and $N$ has non-negative
$(m-l)$-dimensional Bakry-\'Emery Ricci curavture.
\end{theorem}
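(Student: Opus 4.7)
The plan is to run the Cheeger--Gromoll argument with the Laplacian $\Delta$ replaced by the symmetric diffusion operator $L=\Delta-\nabla\phi\cdot\nabla$, exploiting the fact that the curvature--dimension condition $CD(0,m)$ plays the role of $Ric\geq 0$ in effective dimension $m$. The starting point is a weighted Laplace comparison: under $Ric_{m,n}(L)\geq 0$, the distance function $\rho(x)=d(x,p)$ from any point $p\in M$ satisfies
\[
L\rho\leq\frac{m-1}{\rho}
\]
in the barrier sense, obtained by integrating the Riccati inequality for the trace of the weighted shape operator along minimizing geodesics, as in \cite{Q, BQ3, LD}.

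Assuming $M$ contains a line $\gamma\colon\mathbb{R}\to M$, I let $b^{\pm}(x)=\lim_{t\to\pm\infty}\bigl(t-d(x,\gamma(\pm t))\bigr)$ be the associated Busemann functions. The Laplace comparison above together with the triangle inequality gives $Lb^{\pm}\leq 0$ in the barrier sense. Since $L$ is locally uniformly elliptic with smooth coefficients, the strong maximum principle for $L$ applies; combined with $b^{+}+b^{-}\geq 0$ and $b^{+}+b^{-}=0$ along $\gamma$, this forces $b^{+}+b^{-}\equiv 0$. Hence both $b^{\pm}$ are $L$-harmonic and, by elliptic regularity, smooth, with $|\nabla b^{\pm}|\equiv 1$ (standard from $1$-Lipschitzness and saturation along the asymptotic rays).

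The key input is then the weighted Bochner identity
\[
\tfrac{1}{2}L|\nabla b|^{2}=|\mathrm{Hess}\,b|^{2}+\langle\nabla b,\nabla Lb\rangle+(Ric+\mathrm{Hess}\,\phi)(\nabla b,\nabla b)
\]
applied to $b:=b^{+}$. Since $|\nabla b|^{2}\equiv 1$ and $Lb=0$, both the left side and the middle term on the right vanish. From $Lb=0$ we have $\Delta b=\nabla\phi\cdot\nabla b$; combining the Cauchy--Schwarz inequality $|\mathrm{Hess}\,b|^{2}\geq(\Delta b)^{2}/n$ with the pointwise consequence of the hypothesis
\[
(Ric+\mathrm{Hess}\,\phi)(\nabla b,\nabla b)\geq\frac{(\nabla\phi\cdot\nabla b)^{2}}{m-n}=\frac{(\Delta b)^{2}}{m-n}
\]
yields $0\geq m(\Delta b)^{2}/[n(m-n)]$. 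Thus $\Delta b\equiv 0$, whence $\nabla\phi\cdot\nabla b\equiv 0$, and equality in the two inequalities above forces $\mathrm{Hess}\,b\equiv 0$. Therefore $\nabla b$ is a parallel unit vector field orthogonal to $\nabla\phi$, and the de Rham decomposition produces $M=N\times\mathbb{R}$ with $\phi$ constant on the $\mathbb{R}$-factor.

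To iterate, note that on the factor $N$ of dimension $n-1$ the operator descends to $L|_{N}=\Delta_{N}-\nabla_{N}\phi\cdot\nabla_{N}$; since $\nabla\phi$ is tangent to $N$ and $\mathrm{Hess}\,\phi$ has no mixed components in the product, a direct computation gives
\[
Ric_{m-1,n-1}(L|_{N})=Ric_{m,n}(L)|_{TN\otimes TN}\geq 0,
\]
so the hypothesis is preserved with $(m,n)$ replaced by $(m-1,n-1)$. Repeating the argument until $N$ carries no further lines produces the asserted decomposition, with $N$ of nonnegative $(m-l)$-dimensional Bakry--\'Emery Ricci curvature. The step I expect to be most delicate is the weighted Laplace comparison under the pointwise condition $Ric_{m,n}(L)\geq 0$ rather than the stronger $Ric+\mathrm{Hess}\,\phi\geq 0$, since this is precisely where the effective dimension $m$ replaces $n$ in the model ODE and the possibly noninteger value of $m$ must be handled carefully; once this is in place, the remainder is a clean transcription of the classical Cheeger--Gromoll proof with $L$ in place of $\Delta$.
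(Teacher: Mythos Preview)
Your argument is correct and follows essentially the same route as the paper: the weighted Laplace comparison $L\rho\leq(m-1)/\rho$ from \cite{BQ3,LD}, the strong maximum principle for $L$ to get $L$-harmonicity of the Busemann functions, the Bakry--\'Emery Bochner formula to make $\nabla b$ parallel, and iteration of the de~Rham splitting. Two minor remarks: your sign conventions ($Lb^{\pm}\leq 0$ and $b^{+}+b^{-}\geq 0$) are opposite to the paper's, though consistently so; and the paper's Bochner step is a touch simpler---since $Ric_{m,n}(L)\geq 0$ already forces $Ric+\mathrm{Hess}\,\phi\geq 0$, the identity $0=|\mathrm{Hess}\,b|^{2}+(Ric+\mathrm{Hess}\,\phi)(\nabla b,\nabla b)$ gives $\mathrm{Hess}\,b=0$ directly, after which $\partial_{t}\phi=0$ follows from $0=\partial_{t}^{2}\phi\geq(m-n)^{-1}(\partial_{t}\phi)^{2}$, without the detour through $|\mathrm{Hess}\,b|^{2}\geq(\Delta b)^{2}/n$.
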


Our paper provides us with two extensions of the Cheeger-Gromoll
splitting theorem on complete Riemannian manifolds via the
Bakry-\'Emery Ricci curvature. We would like to mention that a
very relevant and independent paper by Wei and Wylie \cite{WW} has
been posted recently in the arxiv. One of their results (see
Theorem $1.4$ in \cite{WW}) says that if $M$ is an $n$-dimensional
complete Riemannian manifold with $Ric+Hess(f)\geq 0$ for some
bounded function $f$ and contains a line, then $M$ splits into
$M=N^{n-1}\times {\mathbb R}$ and $f$ is constant along the line.

The paper is organized as follows: In Section 2, we show that the
Buseman function associated to the line has parallel gradient
field. Then we prove Theorems \ref{t01}, \ref{t02} and Corollary
\ref{c00} in Section 3. In Section 4, we give some remarks on the
Bakry-\'Emery Ricci curvature and the Cheeger-Gromoll splitting
theorem.

\medskip

\noindent {\bf Acknowledgement:} The research was initiated during
the first author's visit to IHES in 2005 and the second author's
visit to the Universit\'e Paris-Sud under a support of
D\'el\'egation au CNRS (2005-2006). The very first version of the
paper was written in 2006.  For some reason, we have not tried to
work out quickly this paper for a submission. In December 2005,
the second author reported Theorem $1.3$ and Theorem \ref{t03} in
the First Sino-French Conference in Mathematics organized by
Zhongshan University at Zhuhai. He would like to thank Professors
D. Bakry, G. Besson, J.-P. Bourguignon, B.-L. Chen, D. Elworthy
and X.-P. Zhu for their interest and helpful discussions.

\section{Estimation of the Laplacian on Buseman function}

 So far, there have been at least three different proofs of the
 Cheeger-Gromoll
splitting theorem. All these proofs turn to prove that the
Busemann function is harmonic. The original proof of Cheeger and
Gromoll \cite{CG} uses the Jacobi fields theory and the elliptic
regularity. The second one by Eschenburg-Heintze \cite{EH} uses
only the Laplacian comparison theorem on distance function and the
Hopf-Calabi maximum principle. The third one, given by Schoen-Yau
\cite{SY}, uses the Laplacian comparison theorem on distance and
the sub-mean value inequality rather than the maximum principle.
For an elegant description of the proof of \cite{EH}, see Besse
\cite{Bs}. We will follow the lines given by \cite{Bs, CG, EH, SY}
to prove the $L$-harmonicity of the Buseman function on $M$.

First we assume $(M,g)$ is a complete Riemannian manifold and
$\phi\in C^{2}(M)$ satisfying that $Ric+Hess(\phi)\geq0$ over $M$.
Fix $p\in M$ as a base point and denote $\rho(x)=\mbox{dist}(p,x)$
the distance function. Given any $q\in M$, let
$\gamma:[0,\rho]\rightarrow M$ be a minimal normal geodesic from
$p$ to $q$ and $\{E_{i}(t)\}_{i=1}^{n-1}$ be parallel orthnormal
vector fields along $\gamma$ which are orthogonal to
$\dot{\gamma}$. Constructing vector fields
$\{X_{i}(t)=\frac{t}{\rho}E_{i}(t)\}_{i=1}^{n-1}$ along $\gamma$
and by the second variation formula, we have the estimate
\begin{eqnarray}
\triangle\rho(q)&\leq&\int_{0}^{\rho}\sum_{i=1}^{n-1}(|\nabla_{\dot{\gamma}}X_{i}|^{2}
-<X_{i},R_{X_{i},\dot{\gamma}}\dot{\gamma}>)dt\nonumber\\
&=&\int_{0}^{\rho}(\frac{n-1}{\rho^{2}}-\frac{t^{2}}{\rho^{2}}Ric(\dot{\gamma},\dot{\gamma}))dt
\nonumber\\
&\leq&\frac{n-1}{\rho}+\int_{0}^{\rho}\frac{t^{2}}{\rho^{2}}Hess(\phi)(\dot{\gamma},\dot{\gamma})dt
\nonumber\\
&=&\frac{n-1}{\rho}+\frac{1}{\rho^{2}}\int_{0}^{\rho}t^{2}\frac{d^{2}}{dt^{2}}(\phi\circ\gamma)
dt\nonumber\\
&=&\frac{n-1}{\rho}+<\nabla\phi,\dot{\gamma}>(q)-\frac{2}{\rho^{2}}\int_{0}^{\rho}t\frac{d}{dt}(\phi\circ\gamma) dt\nonumber\\
&=&\frac{n-1}{\rho}+<\nabla\phi,\dot{\gamma}>(q)-\frac{2}{\rho}\phi(q)
+\frac{2}{\rho^{2}}\int_{0}^{\rho}\phi\circ\gamma dt.\nonumber
\end{eqnarray}
Thus
\begin{eqnarray}\label{e01}
L\rho(q)&\leq&\frac{n-1}{\rho}-\frac{2}{\rho}\phi(q)
+\frac{2}{\rho^{2}}\int_{0}^{\rho}\phi\circ\gamma dt, \ \ \
\forall q\in M\setminus cut(p),
\end{eqnarray}
where $\gamma$ is any minimal normal geodesic connecting $p$ and
$q$.

\begin{lemma}\label{l01}
Let $(M,g)$ be a complete Riemannian manifold and $\gamma$ be a
ray. If $Ric+Hess(\phi)\geq0$ for some smooth function $\phi$
which is bounded from above uniformly on $M$, then the associated
Buesman function of $\gamma$, say $b^{\gamma}$, satisfies that $L
b^{\gamma}\geq0$ in the barrier sense.
\end{lemma}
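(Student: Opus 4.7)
My plan is to adapt the barrier-method proof of the Cheeger-Gromoll splitting theorem (in the style of Eschenburg-Heintze) by using the drift-modified Laplacian estimate (\ref{e01}) in place of the classical Laplacian comparison theorem. Since the Buseman function $b^{\gamma}$ is only Lipschitz in general, the conclusion $Lb^{\gamma}\ge 0$ is to be read in the barrier sense: for each $p\in M$ and each $\varepsilon>0$, it suffices to exhibit a $C^{2}$ lower support function $v$ at $p$ (meaning $v\le b^{\gamma}$ near $p$ and $v(p)=b^{\gamma}(p)$) satisfying $Lv(p)\ge -\varepsilon$.

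The construction of such a barrier follows the standard route. At each $p\in M$ I first produce an asymptotic ray $\sigma:[0,\infty)\to M$ to $\gamma$ issuing from $p$, obtained as a subsequential limit of minimizing segments from $p$ to $\gamma(t_{k})$ for some sequence $t_{k}\to\infty$. For any fixed $s>0$, the identity $b^{\gamma}(\sigma(s))=b^{\gamma}(p)+s$ and the $1$-Lipschitz property of $b^{\gamma}$ give
\[ b^{\gamma}(x)\ \ge\ b^{\gamma}(p)+s-d(x,\sigma(s)) \]
with equality at $x=p$. To obtain a genuinely smooth lower support I replace $d(\cdot,\sigma(s))$ by the smooth upper bound $\tilde d(x):=|\exp_{\sigma(s)}^{-1}(x)|$ taken along the branch of $\exp_{\sigma(s)}^{-1}$ near $-s\dot\sigma(s)$, that is, the branch associated with the minimizing segment $\sigma|_{[0,s]}$. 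The candidate barrier is then $h_{p,s}(x):=b^{\gamma}(p)+s-\tilde d(x)$, which is smooth near $p$ and satisfies $h_{p,s}\le b^{\gamma}$ with equality at $p$.

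Applying (\ref{e01}) at the point $q=p$ with base point $\sigma(s)$---so the minimal geodesic of (\ref{e01}) is $\sigma$ traversed backwards, and the integral of $\phi$ along it equals $\int_{0}^{s}\phi(\sigma(u))\,du$ after a trivial change of variable---yields
\[ L\tilde d(p)\ \le\ \frac{n-1}{s}-\frac{2}{s}\phi(p)+\frac{2}{s^{2}}\int_{0}^{s}\phi(\sigma(u))\,du\ =\ \frac{n-1}{s}+\frac{2}{s^{2}}\int_{0}^{s}\bigl(\phi(\sigma(u))-\phi(p)\bigr)\,du. \]
The hypothesis $\phi\le C$ on $M$ bounds the last term from above by $2(C-\phi(p))/s$, so
\[ Lh_{p,s}(p)\ =\ -L\tilde d(p)\ \ge\ -\frac{n-1+2(C-\phi(p))}{s}, \]
and the right-hand side tends to $0$ as $s\to\infty$. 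Choosing $s$ sufficiently large therefore yields $Lh_{p,s}(p)\ge -\varepsilon$, completing the barrier argument.

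The principal delicate point is the construction of a \emph{smooth} lower support at $p$: in general $p$ may lie in the cut locus of $\sigma(s)$, so $d(\cdot,\sigma(s))$ need not be $C^{2}$ at $p$. This is handled by the smooth branch $\tilde d$ attached to the specific minimizing geodesic $\sigma|_{[0,s]}$, which satisfies $\tilde d\ge d(\cdot,\sigma(s))$ with equality at $p$ and to which (\ref{e01}) still applies, since its derivation is via the second variation formula along that particular geodesic. Conceptually, the uniform upper bound on $\phi$ enters the argument precisely to ensure that the new integral term in (\ref{e01}), compared to the classical Ricci case, decays like $O(1/s)$; without it (as in the hyperbolic-space example mentioned in the introduction) the bound need not tend to zero and $b^{\gamma}$ need not be $L$-subharmonic.
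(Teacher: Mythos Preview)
Your argument is correct and follows the same Eschenburg--Heintze barrier strategy as the paper: construct an asymptotic ray $\sigma$ at $p$, use $b^{\gamma}(p)+s-d(\cdot,\sigma(s))$ as a lower support, and feed estimate~(\ref{e01}) together with the upper bound on $\phi$ to make $L$ of the support at $p$ exceed $-\varepsilon$ for large $s$.

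The one point worth flagging is your detour through the branch $\tilde d=|\exp_{\sigma(s)}^{-1}(\cdot)|$. It is unnecessary: since $\sigma$ is a ray from $p$, the segment $\sigma|_{[0,s']}$ is minimizing for every $s'>s$, hence $\sigma(s)\notin\mathrm{Cut}(p)$, and by the symmetry of the cut-locus relation $p\notin\mathrm{Cut}(\sigma(s))$. Thus $d(\cdot,\sigma(s))$ is already $C^{\infty}$ near $p$, which is exactly what the paper invokes. Note too that if $p$ genuinely could lie in $\mathrm{Cut}(\sigma(s))$, your $\tilde d$ would still require $p$ not to be \emph{conjugate} to $\sigma(s)$ along $\sigma$ for the local inverse of $\exp_{\sigma(s)}$ to exist smoothly---a point you do not address---so the simplification is not merely cosmetic.
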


\begin{remark} We say that a continuous function $f$ on $M$ satisfies
$L f\geq0$ in the barrier sense, if for any given $q\in M$ and
$\epsilon>0$, there is a $C^{2}$ function $f_{q,\epsilon}$ in a
neighborhood of $q$, such that $f_{q,\epsilon}\leq f$ but
$f_{q,\epsilon}(q)=f(q)$, and that $L
f_{q,\epsilon}\geq-\epsilon$. Such $f_{q, \varepsilon}$ is called
a support function of $f$.  We say that $L f\leq0$ in the barrier
sense if $L(-f)\geq0$ in the barrier sense.
\end{remark}

\begin{proof}[Proof of Lemma \ref{l01}] We use the same argument as in \cite{Bs, EH}, see also Lemma 4.7 of
\cite{Z}. Denote $p=\gamma(0)$.  The Buseman function along the
ray $\gamma$ is defined by
$b^{\gamma}(q):=\lim_{t\rightarrow\infty}(t-d(q,\gamma(t)))$. By
\cite{Bs, EH, Z, SY}, $b^\gamma$ is Lipshitz with $1$ as its
Lipshitz constant. Following \cite{Bs, EH, Z}, for any fixed $q\in
M$, we define the support functions around $q$ as follows.

Let $\delta_{t_k}$ be a minimal geodesic connecting $q$ and
$\gamma(t_{k})$. By \cite{Bs, EH, Z}, there exists a subsequence
of $t_k$ such that the initial vector $\dot\delta_{t_k}(0)$
converges to some $X\in T_qM$.  Let $\delta$ be the ray emanating
from $q$ and generated by $X$. Then $q$ does not belong to the
cut-locus of $\delta(r)$ for any $r>0$. So
$b^{\gamma}_{r}(x)=r-d(x,\delta(r))+b^{\gamma}(q)$ is $C^\infty$
around $q$ and satisfies that $b^{\gamma}_{r}\leq b^{\gamma}$ with
$b^{\gamma}_{r}(q)=b^{\gamma}(q)$. On the other hand, by the
estimate (\ref{e01}), we have
\begin{eqnarray*}
L
b^{\gamma}_{r}(x)=-Ld(\delta(r),x)&\geq&\frac{n-1}{d(\delta(r),x)}-\frac{2\phi(x)}{d(\delta(r),x)}\\
& &\
+\frac{2}{d(\delta(r),x)^{2}}\int_{0}^{d(\delta(r),x)}\phi\circ\sigma
dt\nonumber
\end{eqnarray*}
where $\sigma$ is a minimal geodesic connecting $\delta(r)$ and
$x$. Thus for any given $\epsilon>0$, when $r$ is large enough, $L
b^{\gamma}_{r}\geq-\epsilon$ for $x$ in a small neighborhood of
$q$. This shows that $b^{\gamma}_{r}$ is the desired support
function for $b^\gamma$.
\end{proof}

\begin{remark}\label{r02} If $b^{\gamma}$ is smooth at $q$, then $\nabla
b^{+}(q)=\dot\delta(0)$, where $\delta$ is the ray emanating from
$q$ constructed in the proof of Lemma \ref{l01}. See \cite{CG, Z}.
%This can be seen as
%follows: Fix any $s>0$, the points $\delta_{k}(s)$ converge to
%$\delta(s)$, then by definition
%\begin{eqnarray}
%b^{+}(\delta(s))-b^{+}(q)&=&\lim_{k\rightarrow\infty}(d(q,\gamma(t_{k}))-d(\delta(s),\gamma(t_{k})))\nonumber\\
%&=&\lim_{k\rightarrow\infty}(s+d(\delta_{k}(s),\gamma(t_{k}))-d(\delta(s),\gamma(t_{k})))\nonumber\\
%&\geq&\lim_{k\rightarrow\infty}(s-d(\delta_{k}(s),\delta(s)))\nonumber\\
%&=&s.\nonumber
%\end{eqnarray}
%Thus $\frac{d}{ds}|_{s=0}b^{+}(\delta(s))\geq1$ and the result
%follows from the fact that $|\nabla b^{+}|\leq1$.
\end{remark}

\begin{lemma}[The Calabi-Hopf maximum
principal]\label{maxi}  Let $(M, g)$ be a connected complete
Riemaniann manifold, $\phi\in C^2(M)$, and
$L=\Delta-\nabla\phi\cdot\nabla$. Let $f$ be a continuous function
on $M$ such that $Lf\geq 0$ in the barrier sense. Then $f$ attains
no maximum unless it is a constant.
\end{lemma}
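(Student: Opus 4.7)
The approach is the classical Hopf--Calabi strong maximum principle, adapted to the drift operator $L$; the argument proceeds by contradiction. Suppose $f$ attains its maximum $M$ but is non-constant. Then $S := f^{-1}(M)$ is closed, nonempty, and a proper subset of $M$. Starting from any $z \notin S$ and sliding towards $S$ along a minimizing geodesic, I can arrange a point $y \in M \setminus S$ whose distance $r := d(y, S) > 0$ to $S$ is less than the injectivity radius at $y$. Then $B(y, r) \subset M \setminus S$, there is a point $q \in S$ on the boundary sphere $\partial B(y, r)$, and $\rho(x) := d(y, x)$ is smooth on $\bar B(y, r) \setminus \{y\}$.

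Next, I place an exponential barrier on the annulus $A := \{r/2 < \rho < r\}$: set $h := e^{-\alpha \rho^2} - e^{-\alpha r^2}$ for a constant $\alpha > 0$ to be chosen. A direct computation, using $|\nabla \rho| = 1$ on $A$, gives
\[
Lh = e^{-\alpha \rho^2}\bigl[4\alpha^2 \rho^2 - 2\alpha - 2\alpha \rho\,\Delta\rho + 2\alpha \rho\,\langle \nabla\phi, \nabla\rho\rangle\bigr].
\]
Since $\bar A$ is compact and disjoint from the cut-locus of $y$, both $\Delta\rho$ and $|\nabla\phi|$ are bounded on $\bar A$, so for $\alpha$ large enough the quadratic term $4\alpha^2\rho^2$ dominates and $Lh \geq c > 0$ on $A$ for some $c > 0$. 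Observe also that $h > 0$ on $A$, $h = 0$ on the outer sphere $\{\rho = r\}$, and $h \leq 0$ on $M \setminus B(y, r)$.

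Now let $F := f + \eta h$ for $\eta > 0$. Compactness of the inner sphere $\{\rho = r/2\}$ together with $\{\rho = r/2\} \cap S = \emptyset$ yields some $\delta > 0$ with $f \leq M - \delta$ there, so $F \leq M - \delta/2$ on that sphere once $\eta$ is small; outside $B(y, r)$ the inequality $h \leq 0$ forces $F \leq f \leq M$; and on $\partial B(y, r)$ we have $F = f \leq M$ with equality at $q$. Hence $q$ is a local maximum of $F$ lying in the interior of the open set $\{\rho > r/2\}$. Using the barrier-sense hypothesis on $f$, pick a $C^2$ support function $\tilde f$ of $f$ at $q$ with $L \tilde f \geq -\epsilon$ for $\epsilon < \eta c$. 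Then $\tilde f + \eta h$ is $C^2$ near $q$, lies below $F$ with equality at $q$, and hence attains a local maximum at $q$; this forces $\nabla(\tilde f + \eta h)(q) = 0$ and $\Delta(\tilde f + \eta h)(q) \leq 0$, so $L(\tilde f + \eta h)(q) \leq 0$, contradicting the lower bound $L(\tilde f + \eta h)(q) \geq -\epsilon + \eta c > 0$.

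The main obstacle is the barrier estimate in the second step: the first-order drift term $-\nabla\phi\cdot\nabla h$ must be absorbed into the positive second-order part by taking $\alpha$ sufficiently large, which is only possible because we work on a fixed compact annulus where $|\nabla\phi|$ is bounded; the remainder of the argument is the textbook combination of the support-function definition of ``$Lf \geq 0$ in the barrier sense'' with the elementary fact that a $C^2$ function with an interior local maximum satisfies $\nabla = 0$ and $\Delta \leq 0$ there.
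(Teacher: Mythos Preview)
The paper gives no proof beyond citing Calabi and Besse, so your reproduction of the classical Hopf--Calabi barrier argument is exactly what is intended there.

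There is, however, one genuine gap. You assert that $q$ is a local maximum of $F = f + \eta h$, but the estimates you list cover only the inner sphere $\{\rho = r/2\}$, the outer sphere $\{\rho = r\}$, and the exterior $\{\rho > r\}$; in the open annulus $A$ itself you have $f < M$ strictly while $h > 0$, and nothing you have written prevents $F$ from exceeding $M = F(q)$ at points of $A$ arbitrarily close to $q$ (the barrier-sense hypothesis on $f$ gives you no control of $f$ on a neighborhood, only a pointwise support condition). The standard fix: let $q^{*}$ be a point where $F$ attains its maximum over the compact set $\bar A$. Your inner-sphere bound gives $q^{*}\notin\{\rho=r/2\}$, and since $F(q^{*})\geq F(q)=M$ while $F\leq M$ on $\{\rho\geq r\}$, the point $q^{*}$ is a genuine local maximum of $F$ in $M$. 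The rest of your argument --- the support function $\tilde f$ at $q^{*}$, the second-derivative test for the $C^{2}$ function $\tilde f+\eta h$, and the contradiction $0\geq L(\tilde f+\eta h)(q^{*})\geq -\epsilon+\eta c>0$ --- then goes through verbatim, since $Lh\geq c$ holds on all of $\bar A$. (A trivial aside: you are using $M$ for both the manifold and the maximum value of $f$.)
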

\begin{proof} The proof is similar to the one in \cite{C}, see also \cite{Bs}.
\end{proof}

\begin{lemma}\label{l02}
Let $(M,g)$ and $\phi$ be as in Lemma \ref{l01}. Suppose $M$
contains a line $\gamma$, then the Buseman functions $b^{\pm}$
associated to rays $\gamma^{\pm}(t)=\gamma(\pm t),t\geq0,$ are
both smooth and satisfy that $L b^{\pm}=0$.
\end{lemma}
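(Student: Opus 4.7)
The plan is to mimic the classical Cheeger--Gromoll argument, replacing the Laplacian $\Delta$ by the drift Laplacian $L = \Delta - \nabla\phi\cdot\nabla$ and using the two lemmas already established. First, I would consider the auxiliary function $b^+ + b^-$. The triangle inequality gives $d(q,\gamma(t)) + d(q,\gamma(-t)) \geq d(\gamma(-t),\gamma(t)) = 2t$ for all $t\geq 0$, since $\gamma$ is a line; dividing and passing to the limit yields $b^+(q) + b^-(q) \leq 0$ for every $q \in M$. On the line itself, a direct computation gives $b^{\pm}(\gamma(s)) = \pm s$, so the sum achieves its maximum value $0$ along $\gamma$.

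By Lemma \ref{l01}, both $b^+$ and $b^-$ satisfy $Lb^{\pm}\geq 0$ in the barrier sense, and summing two support functions clearly yields a support function for $b^+ + b^-$; hence $L(b^+ + b^-)\geq 0$ in the barrier sense. Since $b^+ + b^-$ attains its maximum $0$ at any point of $\gamma$, the generalized Calabi--Hopf maximum principle (Lemma \ref{maxi}) forces $b^+ + b^- \equiv 0$ on all of $M$. Therefore $b^+ = -b^-$, and in particular $Lb^+ = -L(-b^-)$. Applying Lemma \ref{l01} to $b^-$ produces, for any $q$ and $\varepsilon>0$, a smooth support function $b^-_{q,\varepsilon}\leq b^-$ from below with $Lb^-_{q,\varepsilon}\geq -\varepsilon$; then $-b^-_{q,\varepsilon}\geq -b^- = b^+$ is a smooth support function from \emph{above} for $b^+$ at $q$ satisfying $L(-b^-_{q,\varepsilon})\leq\varepsilon$. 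Combined with the lower support functions from Lemma \ref{l01} for $b^+$ itself, this shows that $Lb^+$ has both signs in the barrier sense, which yields $Lb^+ \equiv 0$ weakly.

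The step I expect to be the main technical obstacle is upgrading the identity ``$Lb^+ = 0$ in the barrier sense'' to genuine smoothness. The natural argument is that the two-sided smooth barriers sandwich $b^+$ from above and below, forcing $b^+$ to be a weak (in fact viscosity) solution of the uniformly elliptic linear equation $Lu = 0$ with smooth coefficients (since $\phi\in C^2$, one first gets $b^+\in C^{1,\alpha}$, and then a standard bootstrap via the Schauder estimates for $L$ pushes the regularity up, limited in principle by the regularity of $\phi$; here the statement merely asserts smoothness in the sense of $C^2$-differentiability, which already suffices to apply $L$ classically). Once $b^+$ is known to be a classical solution of $Lb^+ = 0$, smoothness of $b^-=-b^+$ and the identity $Lb^- = 0$ follow at once, completing the proof.
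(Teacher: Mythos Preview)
Your argument is correct and follows essentially the same route as the paper: show $b^{+}+b^{-}\le 0$ with equality on $\gamma$, use Lemma~\ref{l01} and the Calabi--Hopf maximum principle (Lemma~\ref{maxi}) to get $b^{+}+b^{-}\equiv 0$, deduce two-sided barrier inequalities for $b^{+}$, and invoke elliptic regularity. Your discussion of the regularity step is in fact more careful than the paper's, which simply cites \cite[\S6.3--6.4]{GT}; your observation that $\phi\in C^{2}$ limits the bootstrap is well taken, though $C^{2}$-regularity of $b^{\pm}$ is all that is needed downstream.
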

\begin{proof}
By Lemma \ref{l01}, $L(b^{+}+b^{-})\geq0$ in the barrier sense. On
the other hand, $b^{+}+b^{-}=0$ on the line $\gamma$ and the
triangle inequality implies that $b^{+}+b^{-}\leq0$ over $M$. So
$b^{+}+b^{-}\equiv0$ over $M$ by Lemma \ref{maxi}. Now $L
b^{+}\geq0$ and $L(-b^{+})=L b^{-}\geq0$ show that $L b^{+}=0$ in
the barrier sense, then from the elliptic regularity theorem
$b^{+}$ is smooth and $L b^{+}=0$ in the canonical way, cf.
section 6.3-6.4 of \cite{GT}. Similarly $b^{-}$ is smooth
satisfying that $L b^{-}=0$.
\end{proof}

Next we consider the case where $Ric_{m,n}(L)\geq0$. We have the
following

\begin{lemma}\label{l02b}
Let $M$ be a complete Riemannian manifold, $\phi\in C^2(M)$.
Suppose that there exists a constant $m>n$ such that $ Ric_{m,
n}(L)\geq 0.$ Then the Buseman functions $b^{\pm}$ associated to
rays $\gamma^{\pm}(t)=\gamma(\pm t),t\geq0,$ are both smooth and
satisfy that $L b^{\pm}=0$.
\end{lemma}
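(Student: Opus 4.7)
The plan is to follow the strategy of Lemmas \ref{l01} and \ref{l02}, but to replace the Laplacian comparison (\ref{e01}) (which required $\phi$ to be bounded above) by the cleaner Bakry--\'Emery comparison
\[
L\rho(q)\le\frac{m-1}{\rho(q)},\qquad \forall\, q\in M\setminus cut(p).
\]
The advantage of the $CD(0,m)$ hypothesis is that this bound decays to $0$ at infinity \emph{without} any boundedness assumption on $\phi$, which is exactly what was needed in Lemma \ref{l01} via the extra integral term involving $\phi$.

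The core step is therefore this Laplacian comparison. I would work along a unit-speed minimizing geodesic $\gamma$ from $p$ and set $h(t):=L\rho(\gamma(t))$ and $\alpha(t):=\langle\nabla\phi,\dot\gamma\rangle$. The standard Bochner identity, combined with $|\mathrm{Hess}(\rho)|^2\ge (\Delta\rho)^2/(n-1)$ along $\gamma$, gives
\[
\dot\gamma(\Delta\rho)=-|\mathrm{Hess}\,\rho|^2-Ric(\dot\gamma,\dot\gamma)\le -\frac{(\Delta\rho)^2}{n-1}-Ric(\dot\gamma,\dot\gamma);
\]
subtracting $\mathrm{Hess}(\phi)(\dot\gamma,\dot\gamma)=\dot\gamma(\alpha)$ and applying the hypothesis $Ric+\mathrm{Hess}(\phi)\ge \nabla\phi\otimes\nabla\phi/(m-n)$ yields
\[
h'\ \le\ -\frac{(h+\alpha)^2}{n-1}-\frac{\alpha^2}{m-n}\ \le\ -\frac{h^2}{m-1},
\]
where the last step is the elementary Cauchy--Schwarz inequality $u^2/p+v^2/q\ge (u+v)^2/(p+q)$ applied to $u=h+\alpha$, $v=-\alpha$, $p=n-1$, $q=m-n$. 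Since $h(t)\sim (n-1)/t<(m-1)/t$ as $t\to 0^+$, ODE comparison with the equality solution $y(t)=(m-1)/t$ of $y'=-y^2/(m-1)$ forces $h(t)\le (m-1)/t$ along $\gamma$ up to the cut locus.

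With the comparison in hand, the remainder is transplanted verbatim from Lemmas \ref{l01} and \ref{l02}. For each $q\in M$, extract a subsequential limit of the initial unit tangents of minimal segments from $q$ to $\gamma(t_k)$ to obtain a ray $\delta$ emanating from $q$; then $b^{\gamma}_{r}(x):=r-d(x,\delta(r))+b^{\gamma}(q)$ is a smooth lower support function for $b^\gamma$ near $q$, and the comparison gives $L b^{\gamma}_{r}(x)\ge -(m-1)/d(\delta(r),x)\to 0$ as $r\to\infty$. Hence $L b^\pm\ge 0$ in the barrier sense. Applying Lemma \ref{maxi} to $b^++b^-$, which is $\le 0$ by the triangle inequality and vanishes on the line, forces $b^++b^-\equiv 0$, so $L b^\pm=0$ in the barrier sense; elliptic regularity then promotes $b^\pm$ to smooth solutions of $L b^\pm=0$ exactly as in Lemma \ref{l02}. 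The main obstacle is the dimension-free Laplacian comparison itself; the key point is that the ``unfriendly'' term $\alpha^2/(m-n)$ coming from the $CD(0,m)$ hypothesis combines with $(h+\alpha)^2/(n-1)$ via Cauchy--Schwarz to absorb $\alpha$ completely and replace $n-1$ by $m-1$, making the boundedness of $\phi$ superfluous.
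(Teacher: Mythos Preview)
Your proposal is correct and follows essentially the same route as the paper: both reduce to the Bakry--\'Emery Laplacian comparison $L\rho\le (m-1)/\rho$ and then rerun the barrier/maximum-principle argument of Lemmas \ref{l01} and \ref{l02}. The only differences are that the paper simply cites this comparison from \cite{LD} (whereas you supply the Riccati/Cauchy--Schwarz derivation yourself) and that the paper additionally sketches an alternative Schoen--Yau style distributional proof of $Lb^{\pm}\ge 0$.
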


\begin{proof} Let $b_r^\gamma(x)=t-d(x, \delta(r))+b^\gamma(q)$ be the support
function defined in the proof of Lemma \ref{l01}. By \cite[Remark
3.2]{LD}(pp. 1317-1318), we have the Laplacian comparison theorem
\begin{eqnarray}\nonumber
\left. Ld(\cdot, x)\right|_{y}\leq {m-1\over d(y, x)}, \ \ \forall
\ x\in M, \ y\in M\setminus cut(x).
\end{eqnarray}
This yields that
$$Lb^\gamma_r(x)=-Ld(x, \delta(r))\geq -{m-1\over
d(x, \delta(r))}.$$ Hence, $Lb^{+}\geq 0$ holds in the barrier
sense. Similarly, $Lb^{-}\geq 0$ holds in the barrier sense. By
the same argument as used in the proof of Lemma \ref{l02}, we can
conclude the result. Below we follow \cite{SY} to give an
alternative proof of Lemma \ref{l02b}. Indeed, for all $\psi\in
C_0^\infty(M)$ with $\psi\geq 0$, and for all $t>0$,
\begin{eqnarray*}
\int_M Lb_t^{\gamma}\psi d\mu&=&-\int_M Ld(x, \gamma(t))\psi d\mu\\
&\geq& -\int_M {m-1\over d(x, \gamma(t))}\psi d\mu.
\end{eqnarray*}
Taking $t\rightarrow \infty$, we have $Lb^{+}\geq 0$ in the sense
of distribution. Similarly, $Lb^{-}\geq 0$. Hence
$L(b^{+}+b^{-})\geq 0$ holds in the sense of distribution. By the
strong maximum principle, since the $L$-subharmonic function
$b^{+}+b^{-}$ has an interior maximum on the geodesic ray
$\gamma$, it must be identically constant. Thus, $b^++b^{-}=0$,
$Lb^{\pm}=0$ and $b^{\pm}$ are smooth.
\end{proof}

\begin{lemma}\label{l03}
Under the conditions as in Lemma \ref{l02} or Lemma \ref{l02b},
$\nabla b^{+}$ and $\nabla b^{-}$ are unit parallel vector fields.
\end{lemma}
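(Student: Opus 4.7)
The plan is to apply the weighted Bochner formula to the smooth function $b^{+}$ (and symmetrically $b^{-}$), combine it with the appropriate curvature hypothesis to deduce that $|\nabla b^{+}|^{2}$ is $L$-subharmonic, and then use the Calabi--Hopf maximum principle (Lemma \ref{maxi}) to force $|\nabla b^{+}|^{2}$ to be constant; vanishing of the resulting error terms will then yield $\mathrm{Hess}\,b^{+}\equiv 0$.

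First I would recall the weighted Bochner identity, valid for any $u\in C^{\infty}(M)$:
\begin{equation*}
\tfrac{1}{2}L|\nabla u|^{2}=|\mathrm{Hess}\,u|^{2}+\langle\nabla u,\nabla Lu\rangle+(\mathrm{Ric}+\mathrm{Hess}(\phi))(\nabla u,\nabla u).
\end{equation*}
By Lemma \ref{l02} (resp.\ Lemma \ref{l02b}), $b^{+}$ is smooth and $Lb^{+}=0$, so the middle term drops out. Under the hypothesis of Lemma \ref{l02}, $\mathrm{Ric}+\mathrm{Hess}(\phi)\geq 0$, so we immediately obtain $\tfrac{1}{2}L|\nabla b^{+}|^{2}\geq |\mathrm{Hess}\,b^{+}|^{2}\geq 0$. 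Under the hypothesis of Lemma \ref{l02b}, I rewrite the curvature term as
\begin{equation*}
(\mathrm{Ric}+\mathrm{Hess}(\phi))(\nabla b^{+},\nabla b^{+})=\mathrm{Ric}_{m,n}(L)(\nabla b^{+},\nabla b^{+})+\frac{\langle\nabla\phi,\nabla b^{+}\rangle^{2}}{m-n},
\end{equation*}
both of whose summands are nonnegative, so again $L|\nabla b^{+}|^{2}\geq 0$.

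Next, I would observe that $b^{+}$ is $1$-Lipschitz, hence $|\nabla b^{+}|\leq 1$ throughout $M$, while along the defining line $\gamma$ we have $|\nabla b^{+}|=1$ (e.g.\ by Remark \ref{r02}, $\nabla b^{+}(\gamma(s))=-\dot\gamma(s)$). Thus the smooth, $L$-subharmonic function $|\nabla b^{+}|^{2}$ attains its global maximum at interior points of $M$. Lemma \ref{maxi} then forces $|\nabla b^{+}|^{2}\equiv 1$. Substituting back into the Bochner identity with $Lb^{+}=0$ and $L|\nabla b^{+}|^{2}=0$, all nonnegative terms on the right must vanish; in particular $|\mathrm{Hess}\,b^{+}|^{2}\equiv 0$, i.e., $\nabla b^{+}$ is a unit parallel vector field. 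The argument for $\nabla b^{-}$ is identical, and in the $\mathrm{Ric}_{m,n}(L)\geq 0$ case one gets the bonus identities $\mathrm{Ric}_{m,n}(L)(\nabla b^{\pm},\nabla b^{\pm})\equiv 0$ and $\langle\nabla\phi,\nabla b^{\pm}\rangle\equiv 0$, which will be useful for Theorem \ref{t02}.

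The only place where I expect real care to be needed is the verification that $|\nabla b^{+}|$ genuinely attains its maximum at an \emph{interior} point so that the Calabi--Hopf principle applies; this is exactly where having a full line (rather than just a ray) is essential, since along $\gamma$ the unit speed forces $|\nabla b^{+}|=1$. Everything else reduces to algebra on the Bochner formula and invocation of results already proved earlier in the paper.
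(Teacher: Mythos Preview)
Your argument is correct and hinges on the same weighted Bochner identity as the paper. The one substantive difference is in how you obtain $|\nabla b^{+}|\equiv 1$: you show $L|\nabla b^{+}|^{2}\geq 0$, note that the maximum value $1$ is attained along $\gamma$, and invoke Lemma~\ref{maxi}. The paper is shorter here because it applies Remark~\ref{r02} at \emph{every} point, not just on $\gamma$: once $b^{+}$ is smooth (Lemma~\ref{l02} or \ref{l02b}), Remark~\ref{r02} gives $\nabla b^{+}(q)=\dot\delta(0)$, a unit vector, for all $q\in M$, so $|\nabla b^{+}|\equiv 1$ without any maximum-principle step. After that both proofs feed $L|\nabla b^{+}|^{2}=0$ and $Lb^{+}=0$ into Bochner to force $\mathrm{Hess}\,b^{+}=0$. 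Your route is a bit longer but has the mild advantage of needing only the gradient along the original line, which is elementary (incidentally $\nabla b^{+}(\gamma(s))=+\dot\gamma(s)$, not $-\dot\gamma(s)$, since $b^{+}(\gamma(s))=s$). Your closing worry about interior maxima is moot: $M$ has no boundary, so every point is interior and Lemma~\ref{maxi} applies directly.
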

\begin{proof}
By Remark \ref{r02}, $\nabla b^{\pm}$ are normal vector fields. To
show they are parallel, we will use a generalized version of the
Bochner-Weitzenb\"ock formula. By Bakry-Emery \cite{BE}, for any
smooth function $\psi$, we have
\begin{equation}\label{e03}
L|\nabla\psi|^{2}=2|\nabla^{2}\psi|^{2}+2<\nabla
L\psi,\nabla\psi>+2(Ric+Hess(\phi))(\nabla\psi,\nabla\psi).
\end{equation}
%$$L|\nabla\psi|^{2}=2|\nabla^{2}\psi|^{2}+2<\nabla\triangle\psi,\psi>+2Ric(\nabla\psi,\nabla\psi)
%-2Hess(\psi)(\nabla\phi,\nabla\psi),$$
%$$<\nabla L\psi,\nabla\psi>=<\nabla\triangle\psi,\nabla\psi>-Hess(\phi)(\nabla\psi,\nabla\psi)
%-Hess(\psi)(\nabla\phi,\nabla\psi).$$
Using Lemma \ref{l02} or Lemma \ref{l02b} and applying (\ref{e03})
to $\psi=b^{\pm}$, we see that $0=L|\nabla
b^{\pm}|^{2}\geq2|\nabla^{2} b^{\pm}|^{2}$ over $M$, since
$Ric+Hess(\phi)\geq0$ in both cases. Now the result follows.
\end{proof}

\section{Proof of Theorems \ref{t01} and \ref{t02}}

Now we are in a position to give a
\begin{proof}[Proof of Theorems \ref{t01} and \ref{t02}]
By Lemma \ref{l03}, $X=\nabla b^{+}$ is a parallel unit vector
field. Let $\phi(t)=e^{tX}$ be the one-parameter transformation
group of isometries generated by $X$. The level surface
$N=\{x|b^{+}(x)=0\}$ is a totally geodesic submanifold of $M$, and
the induced metric $h_{N}$ from $g$ is complete. Define a map
$F:N\times\mathbb{R}\rightarrow M$ by
$$F(p,t)=\phi(t)(p).$$ We have
$\frac{d}{dt}b^{+}(\phi(t)p)=|\nabla b^{+}|^{2}(\phi(t)p)\equiv1$.
This implies $F(N,t)\subset\{x\in M|b^{+}(x)=t\}$. We claim that
$F$ is bijective. In fact, for any $x\in M$, letting $q\in N$ be
the nearest point to $x$ and $\gamma$ be the shortest normal
geodesic from $q$ to $x$, then $\dot{\gamma}(q)=X(q)$ and
$\gamma(t)=\phi(t)q$ by the uniqueness of the geodesic, as
$\phi(t)q$ is obviously a normal geodesic. So
$x\in\gamma\subset{\rm Im}(F)$. This proves that $F$ is
surjective. By the group property $F(\cdot,t)\circ
F(\cdot,s)=F(\cdot,t+s)$, $F$ is injective. The claim follows.

Next we prove that $F$ is an isometry. To do so,  notice that
$F(\cdot,t)$ maps $N$ isometrically onto $\{x\in M|b^{+}(x)=t\}$
via $\phi(t)$. So it suffices to show that for any vector $v\in
TN$, we have
$$<dF(\cdot,t)(v),dF(\frac{\partial}{\partial
t})>=<d\phi(t)(v),X>\equiv0.$$ This is obviously true since
$d\phi(t)(TN)\perp X$. So $F$ is an isometry.

Now identifying $(M,g)$ with $(N\times\mathbb{R},h_{N}\otimes
dt^{2})$ and applying (\ref{e03}) to $\psi=b^{+}$, we get
$$0=L|\nabla b^{+}|^{2}=2(Ric+Hess(\phi))(\nabla b^{+},\nabla b^{+})=2\frac{\partial^{2}}{\partial t^{2}}\phi.$$
So $\phi$ is linear on each line of $M$. Since $\phi$ is bounded
from above, it must be constant on each line. This proves Theorem
\ref{t01}.

Finally, if $Ric_{m,n}(L)\geq0$, then (\ref{e03}) yields
$$0=\frac{\partial^{2}}{\partial
t^{2}}\phi\geq\frac{1}{m-n}|\frac{\partial}{\partial
t}\phi|^{2}.$$ So $\phi$ is constant along each line of $M$ and
Theorem \ref{t02} follows.
\end{proof}

\begin{proof}[Proof of Corollary \ref{c00}]
By Theorem 1.1 and using the same argument as did by Cheeger and
Gromoll in \cite{CG}, we conclude the result.
\end{proof}

\begin{remark}
All the arguments in the proof of Theorem \ref{t01} depend only on
the fact that the limit
$$\varlimsup_{\rho\rightarrow\infty}\frac{1}{\rho^{2}}\int_{0}^{\rho}\phi\circ\sigma
dt\leq0
$$ on any ray $\sigma$, see Estimate (\ref{e01}). If so, then Theorem \ref{t01} remains
hold. In particular, if $\frac{\phi(q)}{d(p,q)}=o(1)$ as
$\frac{1}{d(p,q)}\rightarrow0$, where $p$ is a fixed base point,
then Theorem \ref{t01} and all corollaries considered above still
hold.
\end{remark}

Finally we state an alternative result about the splitting
theorem, where the boundedness of the potential function $\phi$ is
removed.
\begin{corollary}\label{c06}
Let $(M,g)$ be an open complete connected Riemannian manifold and
$X$ be a unit parallel vector field. If $Ric+Hess(\phi)\geq cg$
for some $\phi\in C^{2}(M)$ and a constant $c>0$, then $(M,g)$
splits off a line. In particular, any open shrinking Ricci soliton
with a parallel vector field splits off a line.
\end{corollary}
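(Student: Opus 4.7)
The plan is to use the parallel vector field $X$ to produce an isometric splitting $M=N\times\mathbb{R}$, the $\mathbb{R}$-factor being the integral curves of $X$; the strict positivity $c>0$ will play an essential role. Since $X$ is parallel, $R(\cdot,\cdot)X=0$ and in particular $Ric(X,X)=0$, so the hypothesis yields $Hess(\phi)(X,X)\geq c>0$. Along any integral curve $\gamma$ of $X$ (automatically a unit-speed geodesic since $\nabla_{X}X=0$), $\phi\circ\gamma$ is thus strictly convex with second derivative at least $c$; hence $\phi\circ\gamma\to+\infty$ at both ends of $\gamma$, and $\gamma$ cannot be periodic.

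Parallelism of $X$ also implies that the orthogonal distribution $X^{\perp}$ is parallel (hence involutive and integrable by Frobenius), with totally geodesic leaves, and that the flow $\Phi_{t}=e^{tX}$ is a one-parameter group of isometries; combining with the previous paragraph, the associated $\mathbb{R}$-action on $M$ is free and proper. Fix a leaf $N_{0}$ of $X^{\perp}$ and form $F:N_{0}\times\mathbb{R}\to M$, $F(p,t)=\Phi_{t}(p)$. Because $\Phi_{t}$ is an isometry sending $N_{0}$ onto the leaf $\Phi_{t}(N_{0})$, still perpendicular to $X$, the pullback $F^{*}g$ equals the product metric $g_{N_{0}}+dt^{2}$, so $F$ is a local isometry; a bijective local isometry between complete manifolds of the same dimension is automatically a global isometry, which would yield the desired splitting.

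The main obstacle is surjectivity of $F$: every integral curve of $X$ must meet $N_{0}$. This is equivalent to the vanishing of the period homomorphism $a:\pi_{1}(M)\to\mathbb{R}$, $[\ell]\mapsto\int_{\ell}X^{\flat}$, of the closed parallel $1$-form $X^{\flat}$. To establish $a\equiv 0$ I would lift to the universal cover: on the simply connected $\widetilde{M}$ the parallel unit vector field $\widetilde{X}$ integrates to an isometric splitting $\widetilde{M}=\widetilde{N}\times\mathbb{R}$ with $\widetilde{X}=\partial_{t}$, and the deck group $\Gamma$ acts by $\gamma\cdot(q,t)=(b(\gamma)q,\,t+a(\gamma))$ preserving $\widetilde{\phi}$. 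The invariance identity $\widetilde{\phi}(b(\gamma)q,t+a(\gamma))=\widetilde{\phi}(q,t)$, combined with the uniform strict convexity $\partial_{t}^{2}\widetilde{\phi}\geq c$, forces the fibrewise minima $m(q)$ of $\widetilde{\phi}(q,\cdot)$ to satisfy $m(b(\gamma)q)=m(q)+a(\gamma)$; a nonzero $a(\gamma_{0})$ would then push the orbit $\{b(\gamma_{0})^{n}q\}$ to infinity in $\widetilde{N}$ while preserving the common minimum value of $\widetilde{\phi}$ along the corresponding vertical lines, and the strict $c$-convexity would make this incompatible with $\widetilde{\phi}$ descending to a smooth function on $M$, giving a contradiction. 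Once $a\equiv 0$, the splitting descends to $M$, the integral curves of $X$ become lines, and $M=N\times\mathbb{R}$. The final assertion of the corollary is then immediate, since any open shrinking Ricci soliton satisfies $Ric+Hess(\phi)=\lambda g$ with $\lambda>0$, fitting the hypothesis with $c=\lambda$.
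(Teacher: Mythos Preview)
Your setup is correct and the observation that $Ric(X,X)=0$ forces $Hess(\phi)(X,X)\geq c>0$ is a good one; the splitting of $\widetilde{M}$ and the description of the deck action as $\gamma\cdot(q,t)=(b(\gamma)q,\,t+a(\gamma))$ are also fine, and the relation $t^{*}(b(\gamma)q)=t^{*}(q)+a(\gamma)$ for the fibrewise argmin is derived correctly. From this relation one can indeed conclude (via Hopf--Rinow and continuity of $t^{*}$) that the orbit $\{b(\gamma_{0})^{n}q\}$ escapes every compact set of $\widetilde{N}$ when $a(\gamma_{0})\neq 0$.

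The gap is the last step. You assert that ``the strict $c$-convexity would make this incompatible with $\widetilde{\phi}$ descending to a smooth function on $M$,'' but $\widetilde{\phi}$ descends by construction, and nothing you have established prevents the picture in which $b(\gamma_{0})^{n}q\to\infty$ in $\widetilde{N}$, the minimum \emph{value} $v(q)$ stays constant along the orbit, and the argmin $t^{*}$ drifts linearly. None of these facts contradict smoothness or $\Gamma$-invariance of $\widetilde{\phi}$; the convexity you have is only in the $\partial_{t}$-direction, so you cannot run a ``periodic yet strictly convex'' argument along an axis of $\gamma_{0}$, since such an axis need not point in the $X$-direction (indeed it cannot, as you yourself showed that $b(\gamma_{0})$ has no fixed point when $a(\gamma_{0})\neq 0$). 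So the contradiction is not there, and the proof is incomplete.

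The paper closes this gap by a completely different (and shorter) route: it first invokes the theorem of Wei--Wylie/Wylie that $Ric+Hess(\phi)\geq cg$ with $c>0$ forces $\pi_{1}(M)$ to be \emph{finite}. Then the period homomorphism $a:\pi_{1}(M)\to\mathbb{R}$ is a homomorphism from a finite group into $\mathbb{R}$, hence identically zero; equivalently, a nonzero $a(\gamma_{0})$ would make $\gamma_{0}$ act on the $\mathbb{R}$-factor by a nontrivial translation and therefore have infinite order, contradicting $|\pi_{1}(M)|<\infty$. With $a\equiv 0$ the splitting of $\widetilde{M}$ descends to $M$ exactly as you describe. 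If you want to salvage your convexity-based approach, you would need an independent argument that forces $a\equiv 0$ from the equivariance of $t^{*}$ alone; as written, that argument is missing.
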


Recall that a Riemannian manifold $(M,g)$ is a shrinking Ricci
soliton if there exists a smooth function $f$ such that
$Ric+Hess(f)=cg$ for some positive constant $c$.

\begin{proof}
By the result of \cite{WW, W}, such a manifold $M$ has finite
fundamental group $\pi_{1}(M)$. Denote by $(\widetilde{M},
\tilde{g})$ the universal Riemannian covering of $(M, g)$ and let
$\tilde{X}$ be the lifting of $X$. Let $\phi(t)=e^{t\tilde{X}}$
and $N$ be a maximal integral submanifold of $\tilde{X}^\perp$,
the distribution orthogonal to $\tilde{X}$. Define the map $F$ as
in the proof of Theorem \ref{t01} and Theorem \ref{t02}. Then it
can be shown that $F$ is an isometry by the simply connectedness
of $\widetilde{M}$. So we can identify $(\widetilde{M},\tilde{g})$
with $(N\times\mathbb{R},h_{N}\otimes dt^{2})$, where $h_{N}$ is
the restriction of $\tilde{g}$ on $N$. Then the vector field
$\widetilde{X}$ equals ${\partial\over\partial t}$ and is
invariant under the action by $\pi_{1}(M)$. We claim that
$\pi_{1}(M)$ acts trivially on the $\mathbb{R}$-factor.

Suppose not, then there is $\alpha\in\pi_{1}(M)$ and
$(z_{0},t_{0})\in N\times\mathbb{R}$ such that
$\alpha(z_{0},t_{0})=(z_{1},t_{1})$ with $t_{0}\neq t_{1}$. Then
$\alpha$ must maps the line $\{z_{0}\}\times\mathbb{R}$
isometrically onto the line $\{z_{1}\}\times\mathbb{R}$ in the
same direction and maps the slice $N\times\{t_{0}\}$ onto the
slice $N\times\{t_{1}\}$, because it preserves
$\frac{\partial}{\partial t}$. Denote by
$p:N\times\mathbb{R}\rightarrow\mathbb{R}$ the projection to the
$\mathbb{R}$-factor and $i:\mathbb{R}\rightarrow
N\times\mathbb{R}$ the injection $i(t)=(z_{0},t)$. Then
$\bar{\alpha}=p\circ \alpha\circ i$ is a translation on
$\mathbb{R}$ with variation $t_{1}-t_{0}\neq0$. Now
$\{\bar{\alpha}^{k}=p\circ \alpha^{k}\circ i\}_{k=1}^{\infty}$
forms a subgroup of isometry transformation group of $\mathbb{R}$,
which is generated by a translation. This shows that $\alpha$ is a
free element of $\pi_{1}(M)$, which contradicts the finiteness of
$\pi_{1}(M)$. Hence $\pi_{1}(M)$ acts trivially on
$\mathbb{R}$-factor and consequently the base manifold $(M, g)$
splits off a line.
\end{proof}

It is natural to pose the following questions.

\begin{question} Construct a compact Riemannian manifold with negative Ricci curvature somewhere and
with positive Bakry-\'Emery-Ricci curvature everywhere.
\end{question}

\begin{question}
Let $(M,g)$ be an open complete Riemannian manifold with
$Ric+Hess(\phi)\geq cg$ for some function $\phi\in C^2(M)$ and
some constant $c\in {\mathbb R}$. If $(M,g)$ contains a line, does
it really split off a line? In particular, is it true on a
shrinking Ricci soliton?
\end{question}

\section{Some remarks}

In this section, we give some remarks on  the Bakry-\'Emery-Ricci
curvature and the Cheeger-Gromoll splitting theorem.

From our personal conversation with Dominique Bakry, we are able
to know some interesting history about the introduction of the
Bakry-\'Emery Ricci curvature which was named by Lott in
\cite{Lo1}. In the beginning of 1980s, when Bakry studied some
problems of the Riesz transforms associated with diffusion
operators, he observed that some commutation formulae play an
essential role. In \cite{Ba85}, he introduced the so-called
$\Gamma$-operator ({\it le carr\'e du champs}) and the
$\Gamma_2$-operator ({\it le carr\'e du champs it\'er\'e}) which
are symmetric bilinear derivative operators acting on nice
functions. In \cite{BE}, Bakry and \'Emery formulated $\Gamma$ and
$\Gamma_2$ in a very general setting and proved that, if
$\Gamma_2\geq \lambda \Gamma$ for some constant $\lambda>0$, then
the logarithmic Sobolev inequality holds. For a symmetric
diffusion operator $L=\Delta- \nabla\phi\cdot\nabla$ on a
Riemannian manifold, Bakry and \'Emery  \cite{BE} obtained the
fundamental weighted Bochner-Lichnerowicz-Weitzenb\"ock formula,
which says that $Ric+\nabla^2\phi$ is exactly the Ricci curvature
term in the Bochner-Lichnerowciz-Weitzenb\"ock decomposition of
the Witten Laplacian on one-forms on Riemannian manifolds equipped
with weighted volume measures $e^{-\phi}dv$. The notion of the
curvature-dimension $CD(K, n)$-condition for diffusion operators
has also been introduced in \cite{BE}. During 1980-2006, the study
of the comparison theorems of the Bakry-\'Emery-Ricci curvature
has been extensively developed by Bakry and his collaborators
\cite{Ba87, Ba94, BL1, BL2, BQ1, BQ2, BQ3, BL3}. In \cite{BL2},
Bakry and Ledoux proved the generalized Myers theorem for
Markovian diffusion operators satisfying the $CD(R, n)$-condition
with $R>0$. In \cite{Q}, Qian extended the Myers theorem to
complete Riemannian manifolds on which the $(n+\alpha)$-
dimensional Bakry-\'Emery Ricci curvature $Ric+Hess(h)-\alpha^{-1}
\nabla h\otimes \nabla h$ has a uniformly positive lower bound.
In \cite{BQ3}, Bakry and Qian proved a generalized version of the
Bishop-Gomov volume comparison theorem and the generalized
Laplacian comparison theorem. In \cite{LD}, the second author of
this paper gave a new proof of the generalized Laplacian
comparison theorem and extended S.-T. Yau's $L^\infty$-Liouville
theorem, P. Li's $L^1$-Liouville and $L^1$-uniqueness theorem to
the elliptic equation or the heat equation associated with
symmetric diffusion operators on complete Riemannian manifolds.

In \cite{Lo1}, Lott gave a new understanding of the Bakry-\'Emery
Ricci curvature and obtained some interesting results from the
point of view of the measured Gromov-Hausdroff convergence. In
2002-2003, Perelman posted three papers on Arxiv. In \cite{P},
Perelman used the Bakry-\'Emery Ricci curvature to give a modified
version of R. Hamilton's Ricci flow equation. Under the condition
that the weighted measure $e^{-\phi(x)}\sqrt{\det g(x)}dx$ does
not change, Perelman \cite{P} proved that the equation of the
modified Ricci flow for the Riemannian metric $g$ can be viewed as
the gradient flow of the ${\mathcal F}$-functional introduced in
\cite{P} and that the potential function $\phi$ satisfies a
conjugate backward heat equation. He then introduced the so-called
${\mathcal W}$-entropy functional and proved that the ${\mathcal
W}$-functional is monotonically decreasing along the modified
Ricci flow. By the monotonicity of the ${\mathcal W}$-functional
and using an earlier result due to Rothauss on L. Gross'
logarithmic Sobolev inequalities, Perelman \cite{P} proved the
Little Loop Lemma which was conjectured by R. Hamilton in the
1990s. This result consists of one of the most important steps in
Perelman's final resolution of the Poincar\'e conjecture.

By the Bishop-Gromov volume comparison theorem for the weighted
volume measure and using the standard argument as used in Gromov's
original proof \cite{G1, G2} for his famous theorem, we can extend
the Gromov precompactness theorem to compact Riemannian manifolds
with weighted measures via the finite dimensional Bakry-\'Emery
Ricci curavture. More precisely, we have the following

\begin{theorem}\label{t03} Let ${\mathcal M}(m, n, d, K)$ be the set of $n$-dimensional compact Riemannian
manifolds $(M, g)$ equipped with $C^2$-weighted volume measures
$d\mu=e^{-\phi}dv$ such that: $n\leq {\rm dim}_{BE}(L)\leq m$,
${\rm diam}(M)\leq d,$ and $Ric_{{\rm dim}_{BE}(L), n}(L)\geq K$,
where ${\rm dim}_{BE}(L)$ is the Bakry-Emery dimension of the
diffusion operator $L=\Delta-\nabla\phi\cdot\nabla$. Then
${\mathcal M}(m, n, d, K)$ is precompact in the sense of the
measured Gromov-Hausdroff convergence.
\end{theorem}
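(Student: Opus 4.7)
The plan is to mimic Gromov's classical precompactness argument, with the usual Bishop--Gromov volume comparison replaced by its weighted analogue for the symmetric diffusion operator $L=\Delta-\nabla\phi\cdot\nabla$ under $Ric_{m',n}(L)\geq K$ (where $m'=\dim_{BE}(L)\in[n,m]$), as established in \cite{Ba94, BQ3, LD}. That comparison says precisely that on any $(M,g,\mu)\in\mathcal{M}(m,n,d,K)$ the function $r\mapsto \mu(B(x,r))/V_{m'}^K(r)$ is monotonically non-increasing, where $V_{m'}^K(r)$ denotes the volume of the $r$-ball in the $m'$-dimensional simply connected model of constant curvature $K/(m'-1)$. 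This single comparison drives the whole argument.

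First I would derive a uniform packing bound. Fix $\varepsilon>0$ and let $\{x_1,\ldots,x_N\}\subset M$ be a maximal $\varepsilon$-separated subset, so that $\{B(x_i,\varepsilon/2)\}$ are pairwise disjoint while $\{B(x_i,\varepsilon)\}$ cover $M$. Since $\mathrm{diam}(M)\leq d$ forces $B(x_i,d)=M$, the weighted comparison yields
$$\mu(B(x_i,\varepsilon/2))\geq \mu(M)\cdot V_{m'}^K(\varepsilon/2)/V_{m'}^K(d),$$
and summing the disjoint measures gives $N\leq V_{m'}^K(d)/V_{m'}^K(\varepsilon/2)$. Using the explicit formulae for model volumes on the compact range $n\leq m'\leq m$, this is bounded by some constant $N_0(m,n,d,K,\varepsilon)$ independent of the chosen element. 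Hence $\mathcal{M}(m,n,d,K)$ is uniformly totally bounded, and by Gromov's classical criterion any sequence $(M_k,g_k,\mu_k)\in\mathcal{M}(m,n,d,K)$ has a subsequence whose underlying metric spaces converge in the Gromov--Hausdorff topology to some compact $(X,d_X)$.

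To upgrade this to measured Gromov--Hausdorff convergence, I would normalize $\bar\mu_k=\mu_k/\mu_k(M_k)$ to Borel probability measures. The weighted Bishop--Gromov inequality then gives a uniform lower bound $\bar\mu_k(B(x,r))\geq V_m^K(r)/V_m^K(d)$ that does not depend on $k$. Pushing the probability measures forward along $\varepsilon_k$-Hausdorff approximations $f_k:M_k\to X$, the family $\{(f_k)_\ast\bar\mu_k\}$ is tight on a small thickening of $X$, and Prokhorov's theorem extracts a further subsequence converging weakly to some Borel probability measure $\bar\mu_\infty$ on $X$. This yields $(M_k,g_k,\bar\mu_k)\to(X,d_X,\bar\mu_\infty)$ in the measured Gromov--Hausdorff sense, finishing the proof of precompactness.

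The substantive ingredient is the weighted Bishop--Gromov comparison, which is already available in the cited references; what remains is the standard Gromov packing argument plus a Prokhorov extraction. The only real bookkeeping nuisance, and the place I expect to spend most care, is that $m'=\dim_{BE}(L)$ genuinely varies across the family, so the uniformity of the ratio $V_{m'}^K(d)/V_{m'}^K(\varepsilon/2)$ in $m'\in[n,m]$ must be verified from the model-volume formulae -- this is where the upper dimension bound $m$ is actually used.
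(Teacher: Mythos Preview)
Your proposal is correct and follows the same strategy as the paper: invoke the weighted Bishop--Gromov volume comparison from \cite{BQ3, LD} and then run Gromov's standard packing argument to get uniform total boundedness, followed by a Prokhorov extraction for the measures. The paper in fact does not write out a formal proof; it simply points to these ingredients and to Lott \cite{Lo1}.

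There is one small difference worth noting, concerning exactly the ``bookkeeping nuisance'' you flag at the end. You keep the variable dimension $m'=\dim_{BE}(L)\in[n,m]$ and plan to check directly that the ratio $V_{m'}^K(d)/V_{m'}^K(\varepsilon/2)$ is uniformly bounded over $m'\in[n,m]$. The paper avoids this entirely by observing the monotonicity
\[
Ric_{m',n}(L)\geq K\ \ \text{with}\ \ m'\leq m\quad\Longrightarrow\quad Ric_{m,n}(L)\geq K,
\]
which is immediate from the definition $Ric_{m,n}(L)=Ric+Hess(\phi)-\frac{\nabla\phi\otimes\nabla\phi}{m-n}$. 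Hence every element of $\mathcal{M}(m,n,d,K)$ already satisfies the curvature--dimension condition with the \emph{fixed} parameter $m$, and one may run the comparison with $V_m^K$ throughout. This sidesteps the uniformity check you anticipated; your route also works, but the paper's reduction is cleaner.
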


To our knowledge, at least in the case ${\rm dim}_{BE}(L)=m$ and
$\phi\in C^\infty(M)$, Theorem \ref{t03} has been already pointed
out by Lott \cite[Remark 3, p. 881]{Lo1}. Indeed, if
$L=\Delta-\nabla\phi\cdot\nabla$ is a symmetric diffusion operator
with $Ric_{m, n}(L)\geq K$ for some $m\geq n$ and $K\in {\mathbb
R}$, then it is obviously true that $Ric_{m', n}(L)\geq K$ for all
$m'\geq m$. So, if ${\rm dim}_{BE}(L)\leq m$ and if $Ric_{{\rm
dim}_{BE}(L), n}(L)\geq K$, then obviously we have $Ric_{m,
n}(L)\geq K$. Therefore, Theorem \ref{t03} can be recaptured from
the above mentioned result due to Lott \cite{Lo1}, which holds
obviously when $\phi\in C^2(M)$.

Theorem \ref{t02} and Theorem \ref{t03} were obtained in November
2005. During December 12-18, 2005, the second author of this paper
reported Theorem \ref{t02} and Theorem \ref{t03} in the First
Sino-French Conference of Mathematics organized by Zhongshan
University in Zhuhai. During this conference, the second author
had a very nice discussion with Prof. J.-P. Bourguignon on the
Gromov precompactness theorem on weighted Riemannian manifolds.
Prof. Bourguignon also told him that the recent study of M.
Kontsevich on the Conformal Fields Theory needs some kind of
generalizations of the Gromov precompactness theorem.

To what extent can we say about the geometry on the measured
Gromov-Hausdroff limit of the weighted Riemannian manifolds in
${\mathcal M}(m, n, d, K)$? This problem has been a central issue
in the study of the Riemannian geometry in the Large. In
2005-2006, Lott-Villali \cite{LV} and  Sturm \cite{St1, St2}
independently developed a Comparison Geometry of the Ricci
curvature on Metric-Measure Spaces. Roughly speaking, they used
the optimal transports and the entropy functions to introduce the
notions of infinite dimensional Ricci curvature and the finite
dimensional Ricci curvature $Ric_N$ as well as the
curvature-dimension ${\rm\bf CD}(K, N)$-condition on
metric-measure spaces. In \cite{LV}, Lott and Villali proved that,
if $(X, d, \mu)$ is the measured Gromov-Hausdroff limit of a
sequence of compact metric-measure spaces $(X_i, d_i, \mu_i)$, and
if the generalized $N$-dimensional Ricci curvature $Ric_N$ (resp.
the generalized $\infty$-dimensional Ricci curvature) on each
$(X_i, d_i, \mu_i)$ is non-negative, $N\in [1, \infty)$, then $(X,
d, \mu)$ has non-negative generalized $N$-dimensional Ricci
curvature (resp, generalized $\infty$-dimensional Ricci
curvature). Similar stability theorem was also proved by Sturm
\cite{St2} under the ${\rm\bf CD}(K, N)$-condition. It might be
interesting to point out that there are some differences between
Lott-Villani's definition of $Ric_N$ and Sturm's definition of the
${\rm\bf CD}(K, N)$-condition on metric-measure spaces. In the
case of Riemannian manifolds, their definitions coincide with the
standard ones in Riemannian geometry. However, in general they are
not the same. Under the condition that $Ric_{N}\geq 0$ and
$Ric_{\infty}\geq K$ for some $K>0$ on a compact metric-measure
space $(X, d, \nu)$, Lott and Villali proved the so-called Weak
Bonnet-Myers theorem, i.e., ${\rm diam}(X)\leq C\sqrt{N\over K}$
for some universal constant $C>0$. See Theorem 6.30 in \cite{LV}.
On the other hand, under the ${\bf\rm CD}(K, N)$-condition, where
$K>0$ and $N\geq 1$ are two constants, Sturm \cite{St1, St2}
proved that ${\rm diam}(X) \leq \pi \sqrt{N-1\over K}$, which is
the same upper bound for the diameter as indicated in the
classical Bonnet-Myers theorem on complete $N$-dimensional
Riemannian manifolds with Ricci curvature $Ric\geq K>0$.

In \cite{Lo2}, Lott pointed out that the Cheeger-Gromoll splitting
theorem cannot be extended to metric-measure spaces with
non-negative finite dimensional Ricci curvature (in the sense of
Lott-Villali \cite{LV}). Whether or not the Cheeger-Gromoll
splitting theorem can be extended to metric-measure spaces
satisfying the curvature-dimension ${\rm\bf CD}(0, N)$ condition
in the sense of Sturm \cite{St2}, or how to introduce a more
reasonable definition of the finite dimensional Ricci curvature on
metric-measure spaces so that the Cheeger-Gromoll splitting
theorem can be extended? This might be an interesting problem for
a study in future.

\nocite{*}

\end{document}